\documentclass[a4paper]{article}

\usepackage{graphicx}

\usepackage{hyperref}

\usepackage{amsfonts, amsmath, amsthm}

\usepackage[margin=3cm]{geometry}

\usepackage{color}





\newtheorem{thm}{Theorem}[section]

\newtheorem{lem}[thm]{Lemma}


\DeclareMathOperator{\Tr}{Tr}


\begin{document}

\title{An explicit formula for the A-polynomial of twist knots}

\author{Daniel V. Mathews}


\maketitle

\begin{abstract}
We extend Hoste--Shanahan's calculations for the A-polynomial of twist knots, to give an explicit formula.
\end{abstract}


\section{Introduction}

Since Cooper--Culler--Gillet--Long--Shalen introduced the A-polynomial in 1994 \cite{CCGLS}, A-polynomials have found important applications to hyperbolic geometry, the topology of knot complements, and K-theory. More recently, they appear in relation to physics, in particular in the A-J conjecture.

However, calculations of A-polynomials remain relatively difficult. In particular, there are very few infinite families of knots for which A-polynomials are known. In his 1996 thesis, Shanahan \cite{Shanahan96} gave a formula for A-polynomials of torus knots. In 2004, Hoste--Shanahan \cite{Hoste-Shanahan04} gave a recursive formula for the A-polynomial of twist knots, and Tamura--Yokota \cite{Tamura-Yokota04} gave a recursive formula for the A-polynomials of $(-2,3,1+2n)$-pretzel knots. In 2011, Garoufalidis--Mattman  \cite{Garoufalidis-Mattman11} showed that the A-polynomials of $(-2,3,3+2n)$-pretzel knots satisfy a linear recursion relation, effectively demonstrating a recursive formula. Most recently, Petersen \cite{Petersen_a-polynomials} gave a description of the A-polynomials of a family of two-bridge knots $J(k,l)$ including the twist knots (illustrated below) as the resultant of two recursively-defined polynomials, and in the case of twist knots recovers the recursive formula of Hoste--Shanahan. To our knowledge this exhausts the current state of knowledge on formulas for A-polynomials of infinite families of knots.

In this short note we give an explicit, non-recursive formula for the twist knots. Let $J(k,l)$ be the family of knots illustrated in figure \ref{fig:twist_knot}; the twist knots are obtained when $k = \pm 2$. Note that $J(-k,-l)$ is the mirror image of $J(k,l)$, so its A-polynomial is obtained by replacing $M$ with $M^{-1}$ (and normalising appropriately). Further, $J(2,2n+1) = J(-2,2n)$. So it is sufficient to consider the knots $J(2,2n)$; we write $K_n$ for $J(2,2n)$.

Let $A_n (L,M)$ be the A-polynomial of $K_n$.

\begin{figure}
\begin{center}
\def\svgwidth{100pt}
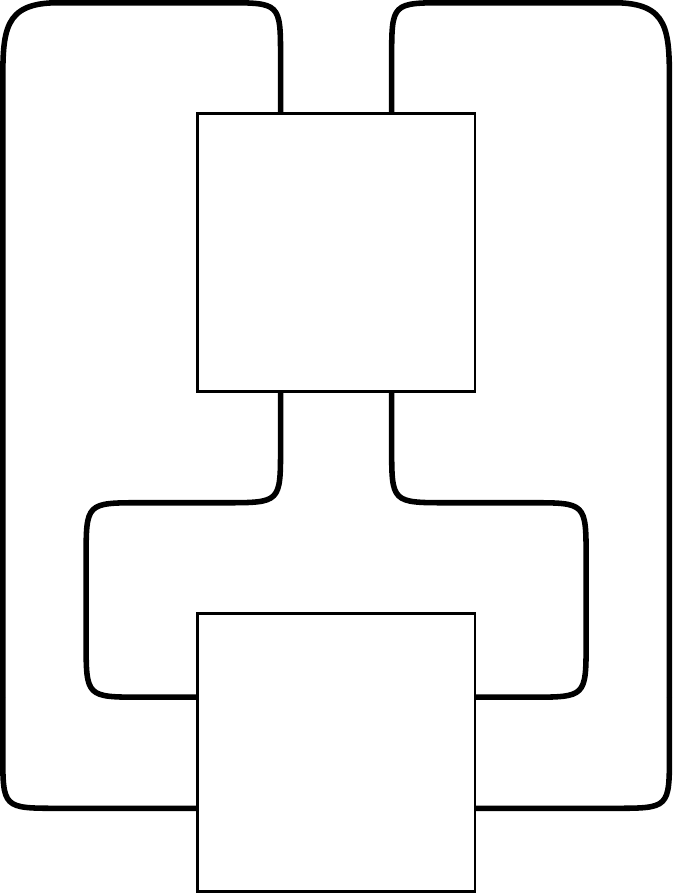
\quad \quad
\def\svgwidth{180pt}
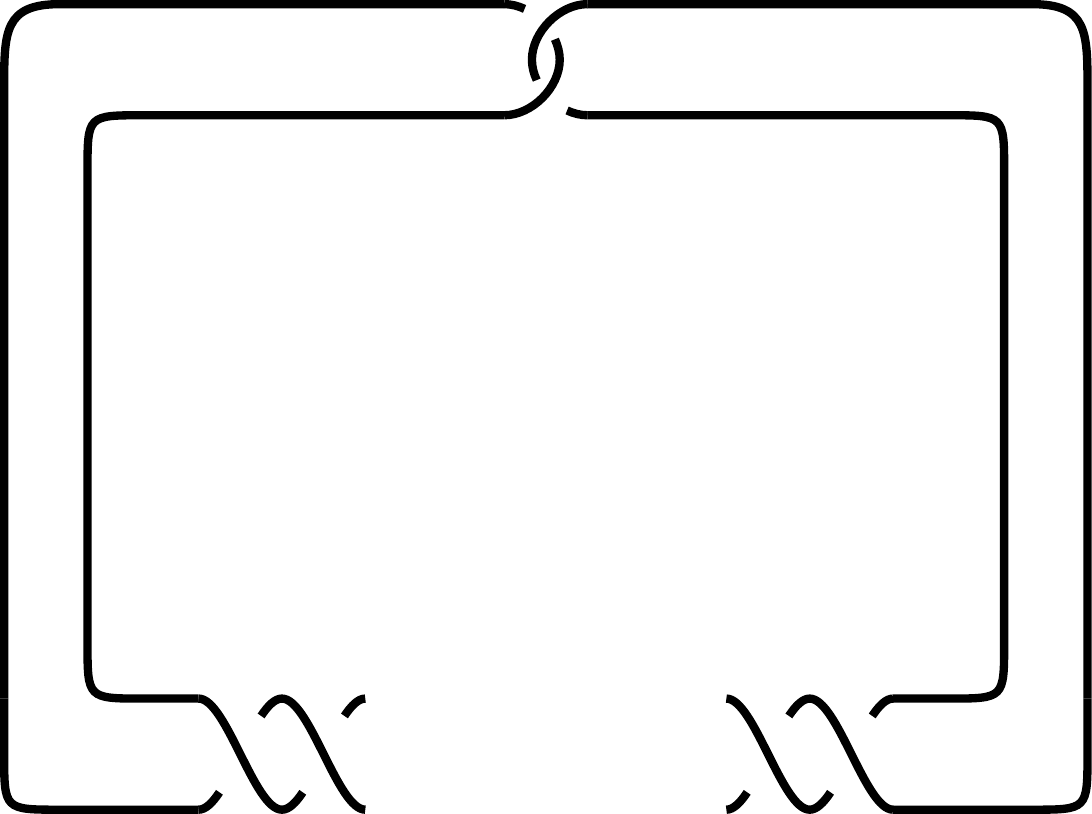
\caption{The knot $J(k,l)$, at left, is given by drawing $k$ and $l$ right-handed half twists in the boxes as shown. The twist knot $K_n$, shown right, with $n$ full twists, is equal to $J(2,2n)$.}
\label{fig:twist_knot}
\end{center}
\end{figure}

\begin{thm}
When $n \geq 0$, we have
\[
A_n (L,M) = M^{2n} (L+M^2)^{2n-1} \sum_{i=0}^{2n-1} \binom{n + \lfloor \frac{i-1}{2} \rfloor }{i} \left( \frac{ M^2 - 1 }{ L+M^2} \right)^i \left( 1 - L \right)^{ \lfloor \frac{i}{2} \rfloor } \left( M^2 - L M^{-2} \right)^{ \lfloor \frac{i+1}{2} \rfloor }.
\]
When $n \leq 0$, we have
\[
A_n (L,M) = M^{-2n} (L+M^2)^{-2n} \sum_{i=0}^{-2n} \binom{-n + \lfloor \frac{i}{2} \rfloor}{i} \left( \frac{M^2 - 1}{L+M^2} \right)^i (1-L)^{\lfloor \frac{i}{2} \rfloor} (M^2 - LM^{-2})^{ \lfloor \frac{i+1}{2} \rfloor }.
\]
\end{thm}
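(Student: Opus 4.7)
The plan is to prove the formula by induction on $n$, showing that the closed-form right-hand side satisfies the three-term recursion for $A_n(L,M)$ proved by Hoste--Shanahan \cite{Hoste-Shanahan04}, together with the correct initial values at two consecutive starting indices. The base cases reduce to direct computation: one verifies that for $n=0$ (and $n=-1$ for the second formula) the right-hand side recovers the known small-case A-polynomials up to the Hoste--Shanahan normalisation, which is mechanical. For instance, at $n=1$ the prefactor is $M^2(L+M^2)$ and the sum has just two terms, yielding $L+M^6$ after simplification, which is the trefoil A-polynomial up to the usual unit ambiguity.

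For the inductive step, I would first simplify the right-hand side by substitution. Setting $z = (M^2-1)/(L+M^2)$, $u = 1-L$, $v = M^2 - LM^{-2}$, and $w = uvz^2$, and splitting the inner sum over $i$ by parity (writing $i = 2j$ or $i = 2j+1$), the bracketed sum becomes $S_n(w) := f_n(w) + zv \, g_n(w)$, where
\[
f_n(w) = \sum_{j \geq 0} \binom{n+j-1}{2j} w^j, \qquad g_n(w) = \sum_{j \geq 0} \binom{n+j}{2j+1} w^j.
\]
These are Chebyshev-type generating functions. A direct application of Pascal's rule $\binom{a+1}{b} = \binom{a}{b} + \binom{a}{b-1}$ gives the first-order relations $f_{n+1} = f_n + w\,g_n$ and $g_{n+1} = g_n + f_{n+1}$, which combine into a clean three-term linear recursion of the form $S_{n+1} = (2+w) S_n - S_{n-1}$ (or an analogue, up to a correction term coming from the $zv$ prefactor on the $g_n$ component).

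The final step is to match this scalar recursion against the Hoste--Shanahan recursion for $A_n$, absorbing the outer prefactor $M^{2n}(L+M^2)^{2n-1}$ into the index shift. After clearing denominators, the inductive step reduces to a single polynomial identity in $L$ and $M$, which should fall out once the explicit substitution $w = (1-L)(M^2-LM^{-2})(M^2-1)^2/(L+M^2)^2$ is made. The negative-$n$ case is treated analogously by inducting downward from $n=0,-1$; the prefactor exponent changes from $2n-1$ to $-2n$, which is a bookkeeping shift that aligns once the Hoste--Shanahan recursion is rewritten in the opposite direction. Alternatively, one may invoke the mirror-image symmetry noted in the introduction, at the cost of some additional normalisation tracking.

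The main obstacle I anticipate is the algebraic bridge in the final step: Hoste--Shanahan's three-term recursion is expressed in terms of somewhat cumbersome rational functions of $L$ and $M$, and one must check that the substitution for $w$ genuinely converts the simple $(2+w)$ coefficient, together with the prefactor shift in $n$, into the Hoste--Shanahan coefficients. The combinatorial manipulation of $f_n$ and $g_n$ via Pascal's rule is routine and the base cases are mechanical, so essentially all of the difficulty is concentrated in this final polynomial verification.
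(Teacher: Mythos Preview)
Your combinatorial core is exactly the paper's: both split the sum by parity of $i$ and use Pascal's rule to verify a three-term recursion of the shape $S_{n} = (2+w)\,S_{n-1} - S_{n-2}$. The difference is where that recursion lives. You work directly with $A_n(L,M)$ and aim to match the Hoste--Shanahan recursion for the A-polynomials themselves, leaving as your ``main obstacle'' the verification that $(2+w)$, together with the prefactor shift $M^{2}(L+M^2)^{2}$, reproduces their coefficients. The paper instead works one level upstream: it proves the closed formula for the polynomial $r_n(M,Z)$ that cuts out the representation variety, where the recursion $r_n = \chi\, r_{n-1} - r_{n-2}$ with $\chi = Z^2 + (M-M^{-1})^2 Z + 2$ is immediate from Cayley--Hamilton, and only then substitutes the explicit value $Z = (M^2-1)(1-L)/(L+M^2)$ coming from the longitude equation $s'_n = 0$.

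This ordering buys something real: under that substitution one has $Z = z u$ and $1 + Z^{-1}(M-M^{-1})^2 = v/u$ in your notation, so $\chi$ becomes exactly $2+w$ with $w = uvz^2$. In other words, your $(2+w)$-recursion \emph{is} the Cayley--Hamilton recursion for $r_n$, already in its final form, and no separate polynomial matching against the Hoste--Shanahan $A_n$-recursion is needed. The A-polynomial is simply $r_n$ with $Z$ substituted and denominators cleared by the stated prefactor, so the step you flagged as the main obstacle is vacuous once you recognise the recursion's provenance. Your approach is correct, but routing through $r_n$ rather than through the published $A_n$-recursion removes the one piece of work you were worried about.
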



The proof is very direct and based on the methods of Hoste--Shanahan \cite{Hoste-Shanahan04}.

\section{Proof of theorem}

We follow Hoste--Shanahan's notation for convenience and refer there for further details. The relevant fundamental group is
\[
\pi_1 (S^3 \backslash K_n )) = \langle a,b \; | \; a (a b^{-1} a^{-1} b)^n = (a b^{-1} a^{-1} b)^n b \rangle = \langle a, b \; | \; a w^n = w^n b \rangle
\]
where $w = a b^{-1} a^{-1} b$. Both $a,b$ are meridians. A general irreducible representation $\rho: \pi_1 (S^3 \backslash J(2,2n))$ may be conjugated to be of the form
\[
\rho(a) = \begin{pmatrix} M & 1 \\ 0 & M^{-1} \end{pmatrix}, \quad
\rho(b) = \begin{pmatrix} M & 0 \\ Z & M^{-1} \end{pmatrix},
\]
where $M, Z$ are both nonzero. (Our $Z$ is $-z$ in \cite{Hoste-Shanahan04}.) The equation $\rho(aw^n ) = \rho(w^n b)$ gives four polynomial relations in $M$ and $Z$, which as discussed in \cite{Hoste-Shanahan04} reduces to a single one $r_n = 0$. Writing
\[
\rho(w^n) = \begin{pmatrix} w_{11}^n & w_{12}^n \\ w_{21}^n & w_{22}^n \end{pmatrix}
\quad \text{we have} \quad 
r_n = (M - M^{-1} ) w_{12}^n + w_{22}^n .
\]

We compute
\[
\rho(w) = \rho(a b^{-1} a^{-1} b) = 
\begin{pmatrix} w_{11}^1 & w_{12}^1 \\ w_{21}^1 & w_{22}^1 \end{pmatrix}
=
\begin{pmatrix} M^2 Z + (1-Z)^2 & M - M^{-1} + ZM^{-1} \\
-ZM^{-1} + ZM + Z^2 M^{-1} & 1 + ZM^{-2} \end{pmatrix}
\]
so that, by the Cayley-Hamilton identity (noting the above matrix has determinant $1$)
\[
\rho(w^n) = \chi \rho(w^{n-1}) - \rho(w^{n-2})
\quad \text{where} \quad
\chi = \Tr \rho(w) = Z^2 + (M - M^{-1})^2 Z + 2.
\]
Hence each entry $w_{ij}^n$ satisfies $w_{ij}^n = \chi w_{ij}^{n-1} - w_{ij}^{n-2}$. As $r_n = (M - M^{-1}) w_{12}^n + w_{22}^n$, we also have a recurrence relation
\begin{equation}
\label{eqn:recurrence}
r_n = \chi r_{n-1} - r_{n-2}.
\end{equation}

On the other hand, a longitude is given by $\lambda = w^n \overline{w}^n$, where $\overline{w} = ba^{-1} b^{-1} a$. We have
\[
\rho(\lambda) = \begin{pmatrix} L & * \\ 0 & L^{-1} \end{pmatrix}
\]
where $L = w_{11}^n \overline{w}_{22}^n + Z w_{12}^n \overline{w}_{12}^n$. Here $\overline{w}_{ij}^n$ is obtained from $w_{ij}^n$ by replacing $M$ with $M^{-1}$. We then have the relation $s_n = 0$, where
\[
s_n = w_{12}^n L + \overline{w}_{12}^n.
\]
Note $r_n$ is a polynomial satisfied by $M$ and $z$, while $s_n$ is a polynomial satisfied by $L, M$ and $z$. Eliminating $z$ from $r_n = 0$ and $s_n = 0$ gives the A-polynomial of $J(2,2n)$.

In the case of twist knots we may simplify $s_n = 0$ to the relation $s'_n = 0$ where
\[
s'_n = w_{12}^1 L + \overline{w}_{12}^1 = (M - M^{-1} + ZM^{-1}) L + M^{-1} - M + ZM.
\]
Thus $s'_n = 0$ is equivalent to 
\begin{equation}
\label{eqn:Z}
Z = \frac{(M - M^{-1})(1-L)}{M + LM^{-1}}.
\end{equation}

All of the above is in \cite{Hoste-Shanahan04}. Our strategy is simply to find an explicit formula for $r_n$ in terms of $M,Z$, and then substitute $Z$ for the expression above in terms of $L$ and $M$.

\begin{lem}
\begin{align}
\label{eqn:r_n}
r_n &= 
\sum_{i=0}^{2n-1} \binom{n + \left\lfloor \frac{i-1}{2} \right\rfloor}{i} Z^i \left( 1 + Z^{-1} (M - M^{-1})^2 \right)^{\left\lfloor \frac{i+1}{2} \right\rfloor} \quad \text{when} \quad n \geq 0 \\
&= \sum_{i=0}^{-2n} \binom{-n + \left\lfloor \frac{i}{2} \right\rfloor}{i} (-Z)^i \left( 1 + Z^{-1} (M-M^{-1})^2 \right)^{\left\lfloor \frac{i+1}{2} \right\rfloor} \quad \text{when} \quad n \leq 0
\end{align}
\end{lem}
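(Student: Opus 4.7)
My plan is to prove both cases by induction on $|n|$ using the recurrence $r_n = \chi r_{n-1} - r_{n-2}$ of equation \eqref{eqn:recurrence}. The crucial preliminary is to rewrite $\chi$ compatibly with the shape of the claimed terms. Setting $v = 1 + Z^{-1}(M-M^{-1})^2$, so that $(M-M^{-1})^2 = Z(v-1)$, one finds
\[
\chi = Z^2 + Z(M-M^{-1})^2 + 2 = Z^2 v + 2.
\]
The key observation is that multiplication by $Z^2 v$ shifts the monomials in the claim uniformly: since $\lfloor (i+3)/2 \rfloor = \lfloor (i+1)/2 \rfloor + 1$, we have
\[
Z^2 v \cdot Z^i v^{\lfloor (i+1)/2 \rfloor} = Z^{i+2} v^{\lfloor (i+3)/2 \rfloor},
\]
and the same identity holds with $Z$ replaced by $-Z$ throughout (the sign $(-1)^{i+2} = (-1)^i$ is preserved).

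Writing the claim as $R_n = \sum_i c_n^i Z^i v^{\lfloor (i+1)/2 \rfloor}$ with $c_n^i = \binom{n + \lfloor (i-1)/2 \rfloor}{i}$, extended by zero outside the stated range, I would expand $(Z^2 v + 2) R_{n-1} - R_{n-2}$ term-by-term using the shift above. The induction step then reduces to the single combinatorial identity
\[
c_n^i = c_{n-1}^{i-2} + 2 c_{n-1}^i - c_{n-2}^i.
\]
Using $\lfloor (i-3)/2 \rfloor = \lfloor (i-1)/2 \rfloor - 1$ and setting $N = n - 2 + \lfloor (i-1)/2 \rfloor$, this simplifies to
\[
\binom{N+2}{i} = \binom{N}{i-2} + 2\binom{N+1}{i} - \binom{N}{i},
\]
an immediate consequence of two applications of Pascal's rule.

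The base cases are direct: $r_0 = 1$ from $\rho(w^0) = I$, and $r_1 = 1 + Zv$ from simplifying $(M - M^{-1}) w_{12}^1 + w_{22}^1$ via the explicit matrix $\rho(w)$ computed earlier. The value $r_0 = 1$ matches the $n \leq 0$ formula (single $i=0$ term at $n=0$), while the $n \geq 0$ formula gives an empty sum there; so $n=0$ is supplied by the second formula. For $n \leq 0$, I would run the recurrence backwards with coefficients $\tilde c_m^i = \binom{m + \lfloor i/2 \rfloor}{i}$, $m = -n$; since $\lfloor (i-2)/2 \rfloor = \lfloor i/2 \rfloor - 1$, the analogous identity again reduces to the same Pascal consequence above, so no new combinatorics is required.

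The main obstacle is purely bookkeeping. The two delicate points are (a) confirming the floor-function identities under the shift $i \mapsto i+2$, which is routine case analysis modulo $2$, and (b) checking that the summation bounds behave correctly at the boundary, i.e., that out-of-range $c_m^j$ vanish automatically because $\binom{N}{j} = 0$ when $j < 0$ or $j > N \geq 0$, so the combinatorial identity extended to all $i \in \mathbb{Z}$ remains consistent with the stated summation limits.
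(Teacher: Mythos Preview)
Your proposal is correct and follows essentially the same route as the paper: write $\chi = Z^2 v + 2$ with $v = 1 + Z^{-1}(M-M^{-1})^2$, check the base cases $r_0$ and $r_1$ directly, and reduce the recurrence \eqref{eqn:recurrence} to the binomial identity $\binom{N+2}{i} = \binom{N}{i-2} + 2\binom{N+1}{i} - \binom{N}{i}$, which is three applications of Pascal's rule. Your bookkeeping is in fact slightly more careful than the paper's at $n=0$ (where the $n\geq 0$ formula is an empty sum and the value is supplied by the $n\leq 0$ formula) and in spelling out the $n\leq 0$ induction.
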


\begin{proof}
Write $f_n$ for the claimed formula above; we show $f_n = r_n$. We give the proof for $n \geq 0$; for $n \leq 0$ the method is similar. Note that the range $0 \leq i \leq 2n-1$ is precisely the range of integers for which $0 \leq i \leq n + \lfloor \frac{i-1}{2} \rfloor$, so we can regard the sum as an infinite one, with all undefined binomial coefficients as zero.

We compute $r_0, r_1$ directly. As $\rho(w^0)$ is the identity, $r_0 = (M - M^{-1}) w_{12}^0 + w_{22}^0 = 1 = f_0$. Noting the computation of $w_{ij}^1$ above, we have $r_1 = (M - M^{-1}) w_{12}^1 + w_{22}^1 = (M - M^{-1}) (M - M^{-1} + ZM^{-1}) + 1 + ZM^{-2} = 1+Z+(M-M^{-1})^2 = f_1$. For convenience write $U = (M-M^{-1})^2$, so $\chi = Z^2 + UZ + 2 = (1+Z^{-1}U)Z^2 + 2$. We now show that $f_n$ satisfies the recurrence (\ref{eqn:recurrence}). 
\begin{align*}
\chi f_{n-1} - f_{n-2} &=
\left( (1+Z^{-1} U) Z^2 + 2 \right) \sum_i \binom{n-1+ \left\lfloor \frac{i-1}{2} \right\rfloor}{i} Z^i \left( 1 + Z^{-1} U \right)^{\left\lfloor \frac{i+1}{2} \right\rfloor} \\
&\quad \quad \quad
- \sum_i \binom{n-2+ \left\lfloor \frac{i-1}{2} \right\rfloor}{i} Z^i \left( 1 + Z^{-1} U \right)^{\left\lfloor \frac{i+1}{2} \right\rfloor} \\
&= \sum_i \left[ 2 \binom{n-1+ \lfloor \frac{i-1}{2} \rfloor}{i} + \binom{n-2+ \lfloor \frac{i-1}{2} \rfloor}{i-2} - \binom{n-2+\lfloor \frac{i-1}{2} \rfloor}{i} \right]
Z^i \left( 1 + Z^{-1} U \right)^{\left\lfloor \frac{i+1}{2} \right\rfloor} \\
&= \sum_i \binom{n + \lfloor \frac{i-1}{2} \rfloor }{i} Z^i \left( 1 + Z^{-1} U \right)^{\left\lfloor \frac{i+1}{2} \right\rfloor} = f_n.
\end{align*}
In the second line we collect the sums together, shifting $i$ to make a sum over $Z^i (1+Z^{-1} U)^{\lfloor \frac{i+1}{2} \rfloor}$. In the last line we apply the binomial relation $\binom{a}{b} + \binom{a}{b+1} = \binom{a+1}{b+1}$ three times.
\end{proof}

Now substituting (\ref{eqn:Z}) for $Z$ into $r_n$, for $n \geq 0$, gives
\[
\sum_{i=0}^{2n-1} \binom{n + \lfloor \frac{i-1}{2} \rfloor}{i} \left( \frac{(M^2 - 1)(1-L)}{L+M^2} \right)^i \left( 1 + \frac{M+LM^{-1}}{(M-M^{-1})(1-L)} \right)^{\left\lfloor \frac{i+1}{2} \right\rfloor}.
\]
We observe that
\[
1 + \frac{M+LM^{-1}}{(M-M^{-1})(1-L)} = \frac{M^2 - LM^{-2}}{1-L},
\]
and $\lfloor \frac{i}{2} \rfloor + \lfloor \frac{i+1}{2} \rfloor = i$ for all integers $i$. The resulting expression,
\[
\sum_{i=1}^{2n-1} \binom{n + \lfloor \frac{i-1}{2} \rfloor }{i} \left( \frac{M^2 - 1}{M^2 + L} \right)^i \left( 1 - L \right)^{\left\lfloor \frac{i}{2} \right\rfloor} \left( M^2 - LM^{-2} \right)^{\left\lfloor \frac{i+1}{2} \right\rfloor},
\]
once denominators are cleared to give a polynomial, gives the A-polynomial $A_n(L,M)$. As explained in \cite{Hoste-Shanahan04}, we multiply by $M^{2n} (L+M^2)^{2n-1}$. Similarly, for $n \leq 0$, we multiply by $M^{-2n} (L+M^2)^{-2n}$. This gives the desired formula, proving the theorem.

\addcontentsline{toc}{section}{References}

\small

\bibliography{danbib}
\bibliographystyle{amsplain}

\end{document}